\theoremstyle{plain}
\newtheorem{theorem}{Theorem}[section]
\newtheorem{remark}[theorem]{Remark}
\newtheorem{definition}[theorem]{Definition}
\newtheorem{example}[theorem]{Example}
\begin{document}
\

\pagestyle{plain}

\pagenumbering{arabic}

\begin{center}\large{\bf Partial actions of groups on quivers and path algebras }\footnote{ The second named author was supported by Fapesp, projeto tem\'atico 2014/09310-5,  by the thematic project of FAPESP  2018/23690-6,  research grants from CNPq 308706/2021-8  and  310651/2022-0. \\
MSC 2010:16W22 \\
Keywords: partial actions; enveloping actions; quivers; path algebras}\end{center}

\begin{center}
{\bf {\rm Wagner Cortes$^1$, Eduardo N. Marcos$^2$}}\end{center}

\begin{center}{ \footnotesize 1 Instituto de Matem\' atica\\
Universidade Federal do Rio Grande do Sul\\
91509-900, Porto Alegre, RS, Brazil\\
2, Departamento de Matematica\\
IME-USP, Caixa Postal 66281,\\
05315-970, S\~ao Paulo-SP, Brazil\\
E-mail: {\it wocortes@gmail.com}, {\it enmarcos@ime.usp.br}}\end{center}

\begin{abstract}

In this article, we introduce the concept of partial actions of a group $G$ on quivers and demonstrate that for any given partial action of G  on a quiver $\Gamma$, there exists another quiver, $\Gamma'$ with a full $G$-action. This is an enveloping action of the partial action of $G$ on $\Gamma$.  We also introduce partial actions of groups on algebras by subalgebras instead of ideals and we define  enveloping actions in this case.  We show that any partial action of a group on a path algebra that is induced by a partial action on a quiver has an enveloping action.

\end{abstract}

\section{Introduction}

Partial actions of groups were first introduced by R. Exel and other authors in the context of C*-algebras (see, for example, [13]). They later appeared in a purely algebraic setting (see \cite{DE}). A different perspective on partial actions emerged earlier in the work of Green and Marcos (see \cite{GM}).

Partial actions of groups have become an important tool for characterizing algebras as partial crossed products. In particular, several aspects of Galois theory can be generalized to partial group actions (see \cite{DFP}), at least under the additional assumption that the associated ideals are generated by central idempotents. Soon after the initial definition, the theory of partial actions was extended to the Hopf algebraic setting (see \cite{CJ} and \cite{ME}). \\ 

An interesting class of partial actions is the one that can be obtained by restricting a global action. This is valuable because it allows us to transfer properties from the global action to the partial one. The existence of globalization was first studied in \cite{abadie}, and numerous other results on this topic were obtained later (see, for instance, \cite{CF}, \cite{DE}, \cite{DRS}, \cite{Ferrero2}). This concept was also explored in the setting of categories by the authors in \cite{CFM}.

Path algebras are important because of the celebrated result of Gabriel in \cite{g} which states that a finite dimension algebra over an algebraically closed field is always Morita equivalent to an algebra
of the form: $kQ/I$ where $J^n \subset I \subset J^2$, where $J$ is the ideal generated by the arrow set. The quiver is then an interesting object, which is uniquely determined by the algebra and it  used in representation theory to describe modules, etc.  

In this article, we study  partial actions of groups on quivers and path algebras. We introduce the notion of partial actions of groups on quivers and prove that any such action has an enveloping action. We also define a new type of partial action of groups on algebras, focusing on subalgebras rather than the traditional approach using ideals. We apply this new notion to the study of partial actions on path algebras and show that they always have an enveloping action, consistent with our definition. Finally, we show that in our setting it is more natural to define the partial action whose partial maps  the partial isomorphisms  are  isomorphisms between subalgebras, not ideals.


\section{Partial actions}

In this section, we have our principal results. We begin with some preliminaries that are necessary to understand the paper.

Throughout this paper  $K$ always denotes a field.

The next two definitions are well known,  see \cite{ASS} for more details.

\begin{definition} 
\begin{itemize} 
\item A quiver $\Gamma$ is a quadruple 
 $\Gamma=(\Gamma_0, \Gamma_1, s, t)$, where $\Gamma_0$ is the set of vertices, $\Gamma_1$ is the set of arrows,  and $s$,$t$ are maps from $\Gamma_1$ to $\Gamma_0$ that assign a source and a target to each arrow. 
\item The path algebra of a quiver $Q$ over $K$, denoted as $K\Gamma$, is defined as follows: 

\begin{itemize} 

\item  $K\Gamma$ is a vector space over the field K. Its basis consists of all paths in $\Gamma$, including a trivial path $e_v$ of length zero for each vertex $v\in \Gamma_0$

\item The multiplication of two basis paths, p and q, is defined by concatenation: 

(i) If the source of path $p$ is the same as the target of path $q$, their product $pq$ is the new path formed by placing $p$ after $q$.

(ii) If the source of $p$ is not the same as the target of $q$, their product is zero.

(iii) Multiplication involving trivial paths is defined as: $p.e_{s(p)}=p$ and $e_{t(p)}=p$.  For any two trivial paths $e_v$ and $e_u$, their product is $e_v$ if $u=v$ and 0 otherwise. 

\end{itemize}

\end{itemize}

\end{definition}

Now, we give the following  remark that will be useful. 

\begin{remark}  We recall that a morphism of quivers $\Phi:\Gamma\rightarrow Q$ consists of a pair of maps $\Psi=(\Psi_0,\Psi_1)$, where $\Psi_0:\Gamma_0\rightarrow Q_0$ and $\Psi_1:\Gamma_1\rightarrow Q_1$ are maps that satisfy $o(\Psi_1(x))=\Psi_0(o(x))$ and $t(\Psi_1(x))=\Psi_0(t(x))$, for all $\gamma\in \Gamma_1$. We can define a composition of quiver morphisms $\Psi:\Gamma\rightarrow Q$ and $\eta:Q\rightarrow L$  by $(\eta\circ \Psi)=(\eta_0 \circ \Psi_0, \eta_1 \circ \beta_1)$. If $\Psi_0$ and $\Psi_1$ are inclusion maps then $\Psi$ is called an inclusion. Moreover, we have the identity $id_{\Gamma}$ as the map $id_{\Gamma}=(id_{\Gamma_0}, id_{\Gamma_1})$.

So, we have a category of quivers whose whose objects  are the quivers and morphims are as above. Note that if $\Psi=(\Psi_0,\Psi_1):\Gamma \rightarrow Q$ is a morphism with $\beta_0$, $\beta_1$ bijections, then we can define  $\Psi^{-1}:Q\rightarrow \Gamma$, where $\Psi^{-1}=(\Psi_0^{-1}, \Psi_1^{-1})$. Thus, we have that $\Psi\circ \Psi^{-1}=id_{Q}$ and $\Psi^{-1}\circ \Psi=id_{\Gamma}$. Hence, $\Psi$ is an isomorphism. The isomorphism between $\Gamma$ and $\Gamma$ are usually called automorphism and it is not difficult to see that the set of automorphisms of $\Gamma$ is a group and we denote it by $Aut(\Gamma)$.\end{remark}

It is convenient to point out that in  \cite{CE},  it is shown that  partial action  on a indecomposable algebra $A$ with an enveloping action is a extension by zero, a notion defined there.  

We want to define a more interesting notion of enveloping action for algebras of the type $A=K\Gamma$, where $K$ is a field. In particular, we define a "natural" notion of partial action $\alpha$ of a group $G$ on a quiver $\Gamma$. With this new notion, we have a partial action on $K\Gamma$. Moreover, if   a quiver $Q$  with a global action $\beta$ that is an enveloping action of $(\Gamma,\alpha)$, then there is a global action $\beta_1$ of a group $G$ on  $KQ$ that is an enveloping action of the induced  partial action of $G$ on  $K\Gamma$ which is denoted by  $\alpha$.

We begin with the following definition of global actions of groups on quivers.

\begin{definition} Let $G$ be a group and $\Gamma$ a quiver. A global action of $G$ on $\Gamma$ is group morphism $\beta:G\rightarrow Aut(\Gamma)$. As usual this means that we have for each $g$, an element $\beta_g\in Aut(\Gamma)$  such that  $\beta_g\circ \beta_h=\beta_{gh}$, for all $g,h\in G$.\end{definition}

Now, we introduced the concept of partial actions of groups on quivers. 

\begin{definition}  Let $\Gamma=(\Gamma_0, \Gamma_1, o, t)$ be a quiver and $G$ a group.  A partial action of $G$ on $\Gamma$ consists of a family of subquivers  $\{\Gamma^{g}, g\in G\}$ and quiver isomorphisms  $\alpha_g:\Gamma^{g^{-1}}\rightarrow \Gamma^{g}$ which satisfies the following properties: 

(i) $\Gamma^e=\Gamma$ and $\alpha_e=id_{\Gamma}$;

(ii) $(\Gamma_0, \alpha)$  is a partial action on the vertex;

(iii) $(\Gamma_1, \alpha)$ is a partial action on the arrows;

(iv) $\{t(\gamma):\gamma\in \Gamma_1^g\}\cup \{o(\gamma)|\gamma\in \Gamma_1^g\}\subseteq \Gamma_0^g$.\end{definition}

\begin{example}  Let $\Gamma$ be a quiver, $\beta$ a global of a group action of $G$ on $\Gamma$ and $\Lambda$ a subquiver of $\Gamma$. We define $\Lambda_0^g=\beta_g(\Lambda_0)\cap \Lambda_0$ and $\Lambda_1^g=\beta_g(\Lambda_1)\cap \Lambda_1$. This defines a partial action $\alpha$ of $G$ on $\Gamma$  which is called the restriction of $\beta$. \end{example}

\begin{remark}  Note that since $\alpha_g$ is an isomorphism for all $g\in G$ we have that  if $\gamma\in \Gamma^{g^{-1}}$, then  $\alpha_g(o(\gamma))=o(\alpha_g(x))$ and $\alpha_g(t(\gamma))=t(\alpha_g(x))$. In some sense, the condition (iv) is a consequence of the fact that $\alpha_g:\Gamma^{g^{-1}}\rightarrow \Gamma^{g}$ is an isomorphism of quivers for all $g\in G$. \end{remark}

Next, we introduce the definition of enveloping action of partial actions of groups on quivers.

\begin{definition} Let $\alpha$ be a partial action  of a group $G$ on a quiver $\Gamma$. A quiver $Q$ with a global action $\beta$ of $G$ on $Q$ is an enveloping action for $(\Gamma, \alpha)$ if the following conditions hold:

(a) There exists an inclusion of quivers $i:\Gamma \hookrightarrow Q$;

(b) $(\Gamma, \alpha)$ is a restriction of $(Q,\beta)$;

(c)  $(Q_0,\beta_0)$ is the enveloping action of $(\Gamma_0, \alpha_0)$;

(d) $(Q_1,\beta_1)$ is the enveloping action of $(\Gamma_1, \alpha_1)$.\end{definition}

The next theorem shows that all partial actions of groups on quivers have enveloping actions.

\begin{theorem} Let $\alpha$ be a partial action of a group $G$ on a quiver $\Gamma$. Then $(\Gamma, \alpha)$ always has an enveloping action. Moreover, two enveloping actions are isomorphic.\end{theorem}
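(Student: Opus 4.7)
The plan is to construct the enveloping quiver by applying the classical Abadie-style enveloping construction separately to the vertex set and the arrow set of $\Gamma$, and then to verify that the source and target maps of $\Gamma$ lift coherently. Since $(\Gamma_0,\alpha_0)$ and $(\Gamma_1,\alpha_1)$ are partial actions of $G$ on sets, the theory in \cite{abadie} supplies sets $Q_0$ and $Q_1$ carrying global $G$-actions $\beta_0$ and $\beta_1$, together with $G$-equivariant injections $\Gamma_0\hookrightarrow Q_0$ and $\Gamma_1\hookrightarrow Q_1$, such that the $G$-orbits of $\Gamma_0$ and of $\Gamma_1$ exhaust $Q_0$ and $Q_1$ respectively, and such that each original partial action is recovered by restriction to the image of $\Gamma_i$.

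The core step is to endow $Q=(Q_0,Q_1)$ with source and target maps that make it a quiver and turn $\beta=(\beta_0,\beta_1)$ into a quiver automorphism. Since every element of $Q_1$ has the form $\beta_{1,g}(\gamma)$ for some $g\in G$ and $\gamma\in\Gamma_1$, I would define $s(\beta_{1,g}(\gamma)):=\beta_{0,g}(o(\gamma))$ and $t(\beta_{1,g}(\gamma)):=\beta_{0,g}(t(\gamma))$. I expect the main obstacle to be well-definedness: if $\beta_{1,g}(\gamma)=\beta_{1,h}(\delta)$ for $\gamma,\delta\in\Gamma_1$, then translating to the partial action gives $\gamma\in\Gamma_1^{g^{-1}h}$ and $\alpha_{1,h^{-1}g}(\gamma)=\delta$. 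Here condition (iv) of the definition of partial action enters decisively, since it ensures $o(\gamma)\in\Gamma_0^{g^{-1}h}$; then the remark following the definition yields $\alpha_{0,h^{-1}g}(o(\gamma))=o(\delta)$, whence $\beta_{0,g}(o(\gamma))=\beta_{0,h}(o(\delta))$, and similarly for $t$. Without condition (iv) this descent would fail. Once $s,t$ are defined, $\beta_g$ clearly commutes with them by construction, so $\beta_g\in Aut(Q)$. The inclusion $i:\Gamma\hookrightarrow Q$ is componentwise, and condition (b) of the definition of enveloping action is inherited from the corresponding set-level statements; conditions (c) and (d) hold tautologically.

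For the uniqueness statement, let $(Q,\beta)$ and $(Q',\beta')$ be two enveloping actions of $(\Gamma,\alpha)$. By the uniqueness of enveloping actions of partial actions on sets there exist $G$-equivariant bijections $\phi_0:Q_0\to Q_0'$ and $\phi_1:Q_1\to Q_1'$ that extend the respective identities on $\Gamma_0$ and $\Gamma_1$. To see that $(\phi_0,\phi_1)$ is a morphism of quivers, write an arbitrary $x\in Q_1$ as $x=\beta_{1,g}(\gamma)$ with $\gamma\in\Gamma_1$; then
\[
s'(\phi_1(x))=s'(\beta'_{1,g}(\gamma))=\beta'_{0,g}(o(\gamma))=\phi_0(\beta_{0,g}(o(\gamma)))=\phi_0(s(x)),
\]
and the analogous computation works for $t$. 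Hence $(\phi_0,\phi_1)$ is a quiver isomorphism that intertwines $\beta$ and $\beta'$ and restricts to the identity on $\Gamma$, proving the uniqueness clause.
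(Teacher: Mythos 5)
Your proposal is correct and follows essentially the same route as the paper: build $Q_0$ and $Q_1$ as the set-level enveloping actions, transport the maps $o$ and $t$ along the global action, and verify well-definedness. You are in fact more careful than the paper at the key point --- correctly pinpointing condition (iv) and the remark following the definition as what makes the descent of the source and target maps work --- and you also supply the uniqueness argument that the paper leaves to the reader.
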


\begin{proof} We only prove the isomorphism of two enveloping actions, since the proof of the existence is straightforward. Let $(Q_0, \beta_0)$ and $(Q_1,\beta_1)$ be the enveloping actions of $(\Gamma_0, \alpha)$ and  $(\Gamma_1, \alpha)$, respectively. We define the quiver $\Lambda=(Q_0, Q_1, o,t)$ as follows: if $b\in Q_1$, then $b=\beta_1^g(a)$, for some $a\in \Gamma_1$, $g\in G$.  We  then define $o(b)=\beta_0^g(o(a))$ and $t(b)=\beta_0^g(t(a))$. We need to show that this is well defined.  In fact, let $y\in Q_1$ such that $y=\beta_1^g(x)=\beta_1^h(z)$. Then $x=(\beta_1^{g^{-1}}\circ \beta_1^h)(z)$. Since $\beta$ is a global action, we have   $o(x)=(\beta_0^{g^{-1}}\circ \beta_0^{h})(o(z))$ and $t(x)=(\beta_0^{g^{-1}}\circ \beta_0^{h})(t(z))$. Hence, we have that  $\beta_0^g(o(x))=\beta_0^{h}(o(z))$ and $\beta_0^g(t(x))=\beta_0^{h}(t(z))$, which shows that this is well defined. We leave the remainder of this proof to the reader.\end{proof}

For our purpose in this article,  we need a different kind of partial action of groups. The  next definition  generalizes the concept of partial actions of groups in the sense of \cite{DE}. 

\begin{definition} Let $G$ be a group and $R$  a $K$-algebra. A partial action $\alpha$ of $G$ on $R$ by subalgebras consists of a family  $(R_g)_{g\in G}$ of subalgebras of $R$ and isomorphisms $\alpha:R_{g_{-1}}\rightarrow R_g$ such that the following properties hold.

(i) $R_e=R$ and $\alpha_e=id_R$;

(ii) $\alpha_g(R_{g^{-1}}\cap R_h)=R_g\cap R_{gh}$;

(iii)  $\alpha_g\circ \alpha_h(x)=\alpha_{gh}(x)$, for all $x\in \alpha_{h^{-1}}(R_h\cap R_{g^{-1}})$. \end{definition}

\begin{remark} If we require that each $R_g$ is an ideal of $R$, then  we have the usual definition  of partial actions of groups. As we will  see this requirement is too strong in the set of basic finite dimensional algebras over algebraically closed field.\end{remark}

\begin{example} Let $G$ be a group, $\beta$ a global action of $G$ on an algebra $T$ and $R$ a subalgebra of $T$. For each $g\in G$ we define $R_g=\beta_g(R)\cap R$ and $\alpha_g=\beta_g|_{R_{g^{-1}}}$. It is clear  that $R_g$ is a subalgebra of $R$ and $\alpha_g$ is an isomorphism of algebras. The  properties of the last definition are clearly satisfied. The partial action $\alpha$ obtained in this example is called the restriction of $\beta$ to $R$. \end{example}

\begin{definition} Let $\alpha$ be a partial action of $G$ on $R$ by subalgebras. An algebra $T$ with a global action $\beta$ of $G$ is a globalization for $(R,\alpha)$ if the following conditions are satisfied:

(i) There is a monomorphism of algebras $i:R\rightarrow T$ ( we view $R$ as a subalgebra of $T$).

(ii) $T$ is the subalgebra generated by $\cup_{g\in G}\beta_{g\in G}(R)$.

(iii) $D_g=\beta_g(R)\cap R$ is a subalgebra with identity of $R$.

(iv) $(R, \alpha)$ is the restriction of $(T,\beta)$. \end{definition}

\begin{remark} (i) If each $\beta_g(R)$ is an ideal of $T$, then $\sum_{g\in G}\beta_g(R)$ is the subalgebra generated by $\cup_{g\in G}\beta_g(R)$. So, our definition coincide with the usual definition in the case that all $\beta_g(R)$ are ideals.

 (ii) As we already point out let $(S,\beta)$ be an enveloping action by subalgebras of $(R,\alpha)$. If each $R_g$ is an ideal of $R$ and $R$ is an ideal of $S$, then $(S,\beta)$ is the enveloping action in the sense of [11]. 

 \end{remark}

 Let $\alpha$ be a partial action of a group $G$ on a quiver $\Gamma$. We define a partial action of $G$ on $K\Gamma$ in a natural way as follows:  for each $g\in G$ we set $(K\Gamma)_g=K\Gamma_g$ which are clearly subalgebras of $K\Gamma$ and the isomorphism $(K\Gamma)_g\rightarrow (K\Gamma)_{g^{-1}}$ induced by the quiver isomorphism $\Gamma_{g^{-1}}\rightarrow \Gamma_{g}$. We also denote this action by $\alpha$ and we call it the induced action. Note that the induced action on $K\Gamma$ is not a partial action in the sense  \cite{DE}.   
 
 The next result is our main reason to define partial actions by subalgebras.

\begin{theorem} Let $(\Gamma, \alpha)$ be a partial action on a quiver $\Gamma$ and  $(Q, \beta)$ its enveloping action. Then $(KQ, \beta)$ is the enveloping action of $(K\Gamma, \alpha)$, and any two enveloping actions are isomorphic.  \end{theorem}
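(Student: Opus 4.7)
The plan is to promote the quiver-level enveloping action $(Q,\beta)$ to the path-algebra level by exploiting the functoriality of the path algebra construction. The quiver inclusion $i:\Gamma\hookrightarrow Q$ induces an injective $K$-algebra homomorphism $K\Gamma\hookrightarrow KQ$, so I may view $K\Gamma$ as a subalgebra of $KQ$, and each quiver automorphism $\beta_g\in\mathrm{Aut}(Q)$ extends $K$-linearly to an algebra automorphism of $KQ$, giving a global action still denoted by $\beta$. With this data in hand, I need to verify the four defining properties of an enveloping action by subalgebras, and then establish uniqueness.

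Conditions (i) and (iv) are essentially immediate: (i) is the inclusion just described, while (iv) reduces to observing that both $\alpha_g$ and $\beta_g|_{\beta_g(K\Gamma)\cap K\Gamma}$ are the $K$-linear extensions of the same quiver isomorphism $\Gamma^{g^{-1}}\to\Gamma^g$. For (ii), since the quiver enveloping action satisfies $Q_0=\bigcup_g\beta_g(\Gamma_0)$ and $Q_1=\bigcup_g\beta_g(\Gamma_1)$, every trivial path and every arrow of $Q$ lies in some $\beta_g(K\Gamma)$, and since any path of $Q$ is a product of arrows and trivial paths, $KQ$ is generated as an algebra by $\bigcup_g\beta_g(K\Gamma)$. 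The key step is (iii): I would identify $\beta_g(K\Gamma)\cap K\Gamma$ with $K\Gamma^g=(K\Gamma)_g$ by using that $\beta_g$ permutes the path basis of $KQ$, so that both $K\Gamma$ and $\beta_g(K\Gamma)$ are $K$-spans of distinguished subsets of this basis, whence their intersection is exactly the $K$-span of the paths lying simultaneously in $\Gamma$ and in $\beta_g(\Gamma)$, i.e.\ the paths of $\Gamma^g$. The identity element is $\sum_{v\in\Gamma_0^g}e_v$, which is a well-defined idempotent under the usual finiteness hypothesis on the vertex set.

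For uniqueness, given another enveloping action $(T,\gamma)$ of $(K\Gamma,\alpha)$, I would define $\varphi:KQ\to T$ by $\beta_g(x)\mapsto\gamma_g(x)$ for $x\in K\Gamma$ and extend multiplicatively and linearly; well-definedness on elements of the form $\beta_g(x)$ reduces to the fact that both $\beta$ and $\gamma$ restrict to $\alpha$ on $K\Gamma$, so any relation $\beta_g(x)=\beta_h(y)$ (which forces $x=\alpha_{g^{-1}h}(y)$ in $K\Gamma$) is automatically respected by $\gamma$. Surjectivity follows from condition (ii) for $T$, and the symmetric construction gives the inverse, yielding an isomorphism of global $G$-algebras extending the identity on $K\Gamma$. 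The main anticipated obstacle is precisely the intersection identification in (iii): one must rigorously rule out the possibility that some non-trivial linear combination of paths of $\Gamma$ not supported on $\Gamma^g$ coincides in $KQ$ with a linear combination of paths of $\beta_g(\Gamma)$, and the argument rests on the freeness of the path basis of $KQ$ together with the fact that $\beta_g$ maps paths to paths bijectively.
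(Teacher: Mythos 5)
Your proposal is correct and follows essentially the same route as the paper: extend the quiver-level enveloping action functorially to $KQ$, use the multiplicative path basis $Q_0\cup Q_1$ to see that $\bigcup_{g\in G}\beta_g(K\Gamma)$ generates $KQ$, and obtain uniqueness by defining the comparison map on the $\beta_g$-translates of vertices and arrows and invoking the universal property of the path algebra. You are in fact more thorough than the paper, whose proof only records the generation statement and the uniqueness map, leaving the verification of the intersection identity $\beta_g(K\Gamma)\cap K\Gamma=K\Gamma^g$ (your step (iii)) and the remaining axioms to the reader.
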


\begin{proof}   Let $(Q,\beta)$ be the enveloping action of action of $(\Gamma, \alpha)$ as in Theorem 2.8.  Then  $\beta=(\beta_0, \beta_1)$,   $Q_0=\cup_{g\in G}\beta_0^g(\Gamma_0)$ and $Q_1=\cup_{g\in G}\beta_1^g(\Gamma_1)$.  Hence, $KQ$ is generated by $\cup_{g\in G}\beta_g(K\Gamma)$ (note that $Q_0\cup Q_1$ is a multiplicative basis of $KQ$). 

Now, let  $(S,\beta')$ be another of $(K\Gamma,\alpha)$. We define $\eta:KQ\rightarrow S$ by $\eta(v)=\eta(\beta_g(v))=\beta'_g(v')$ and $\eta(\alpha)=\eta(\beta_g(\alpha))=\beta'_g(\alpha)$, for some $v\in Q_0$, $a\in Q_1$, where $v=\beta_g(v')$, $a=\beta_g(a')$ with $a', v'\in K\Gamma$. Consequently, $\eta(vfu)=\eta(\beta_g(v\beta_g(f)u))=\eta(\beta_g(vfu))=\beta'_g(vfu)=\eta(v)\eta(f)\eta(u)$.  By the universal property of path algebras there is unique extension of $\eta$ to the algebra $KQ$.\end{proof}

  In item (ii) of  Definition 2.12,  we require that if $(T,\beta)$ is a globalization  of $(R,\alpha)$  then $\cup_{g\in G} \beta_g(R)$ generate as algebra the algebra $T$. The next example shows that it is not enough to assume that $\sum_{g\in G}\beta_{g\in G}\beta_g(R)=T$.

\begin{example} We start with a global action on the path algebra $KQ$. The quiver is given by  $(Q_0,Q_1,o,t)$, where $Q_0=\{1,2,3,4\}$ and $Q_1=\{a:1\rightarrow 2, b:2\rightarrow 3, c:3\rightarrow 4, d:4\rightarrow 1\}$. We consider a cyclic group $G$  of order 4 generated by $\tau$ that acts globally
 on $KQ$ as follows: $\tau(1)=2$, $\tau(2)=3$, $\tau(3)=4$, $\tau(4)=1$, $\tau(a)=b$, $\tau(b)=c$, $\tau(c)=d$ and $\tau(d)=a$.
 
 Now, let  $\Gamma=(\Gamma_0,\Gamma_1,o,t)$ where $\Gamma_0=\{2,3\}$ and $\Gamma_1=\{a,b\}$. The partial action $\alpha$ of $G$ on $\Gamma$ as follows:  $\Gamma_{\tau}^0=\{2,3\}$, $\Gamma_{\tau^2}^0=\{1,3\}$, $\Gamma_{\tau^3}^0=\{1,2\}$, $\Gamma_{\tau}^{1}=\{b\}$, $\Gamma_{\tau^{2}}^1=\emptyset$ and $\Gamma_{\tau^{3}}^1=\{a\}$. The maps are $\alpha_{\tau}^1(a)=b,$ $\alpha_{\tau^3}(b)=a$. For the vertices we have  $\alpha_{\tau}^0=\left\{\begin{array}{c}
 1\rightarrow 2\\
 2\rightarrow 3 \end{array}\right.$, $\alpha_{\tau^2}^0=\left\{\begin{array}{c}
 1\rightarrow 3\\
 3\rightarrow 1 \end{array}\right.$, $\alpha_{\tau^3}^0=\left\{\begin{array}{c}
 2\rightarrow 1\\
 3\rightarrow 1 \end{array}\right.$, $\alpha_{\tau}^1(a)=b$ and $\alpha_{\tau^3}(b)=a$.  We clearly have that the partial action  defined above is the restriction of the global action of $G$ on $Q$. Now, we easily see that there is  a partial action of $G$ on  $K\Gamma$ and a global action of $G$  on $KQ$. Note that $\sum_{g\in G}\beta_{g\in G}(K\Gamma)$ is the  vector space generated by the paths of lenght less or equal to 2, which is strictly contained in $KQ$.\end{example}

 In the next example, we show that the partial action  on the path algebra is not a partial action in the sense of Exel and Dokuchaev in \cite{DE}.

\begin{example} We consider the algebra $K\Gamma$, which is the path algebra given by the quiver $\Gamma=(\Gamma_0,\Gamma_1, o, t)$, where $\Gamma_0=\{v_1,v_2\}$ and $\Gamma_1=\{f:v_1\rightarrow v_2\}$. Let $G$ be a cyclic group of order 3 generated by $\sigma$. We define a partial action by subalgebras $\alpha$ of $G$ on $K\Gamma$ as follows: $R_e=K\Gamma$, $R_{\sigma}=Kv_1$ and $R_{\sigma^2}=Kv_2$ with the natural  algebra isomorphisms $\alpha_{e}=id$, $\alpha_{\sigma}:v_2\rightarrow v_1$ and $\alpha_{\sigma^{2}}:v_1\rightarrow v_2$.   Note that this partial action is induced by the partial action of $G$  on $\Gamma$ defined by  $\Gamma_{e}^0=\Gamma$, $\Gamma_0^{\sigma}=v_1$, $\Gamma_0^{\sigma^2}=v_2$, $\Gamma_1^{e}=\Gamma_1$, $\Gamma_1^{\sigma}=\Gamma_1^{\sigma^2}=\emptyset$. Thus,  we have the  enveloping action $(Q, \beta)$ of $\Gamma$ such that $Q=(Q_0, Q_1, o,t)$, where $Q_0=\{v_1,v_2,v_3=\sigma^{2}(v_1)\}$, $Q_1=\{f\,\, \sigma(f):v_2\rightarrow v_3,\,\,\sigma^2(f)::v_3\rightarrow v_1\}$. Consequently, we easily have that $KQ$ is the enveloping action of $K\Gamma$.

Note that the partial action on $K\Gamma$ is not a partial action in the sense of   \cite{DE}  since $Kv_i$ is not an ideal for $i\in \{1,2\}$.  \end{example}




\end{document}